\pgfplotsset{compat=1.13}
\definecolor{ccqqww}{rgb}{0.8,0.,0.4}
\definecolor{ffccww}{rgb}{1.,0.8,0.4}
\definecolor{qqwwzz}{rgb}{0.,0.4,0.6}
\definecolor{ttzzqq}{rgb}{0.2,0.7,0.4}
\theoremstyle{plain}
\newtheorem{theorem}{Theorem}[section]
\newtheorem{lemma}[theorem]{Lemma}
\newtheorem{proposition}[theorem]{Proposition}
\newtheorem{proposition*}{Proposition}
\newtheorem{corollary}[theorem]{Corollary}
\theoremstyle{definition}
\newtheorem*{definition}{Definition}
\newtheorem{example}{Example}
\theoremstyle{remark}
\newtheorem{remark}{Remark}
\numberwithin{equation}{section}
\DeclareMathOperator{\R}{\mathbb{R}}
\DeclareMathOperator{\C}{\mathbb{C}}
\DeclareMathOperator{\complexset}{\mathbb{C}}
\DeclareMathOperator{\quatset}{\mathbb{H}}
\DeclareMathOperator{\K}{\mathbb{K}}
\DeclareMathOperator{\A}{\mathcal{A}}
\DeclareMathOperator{\Lw}{\mathtt{L}_\mathnormal{w}}
\newcommand{\ltext}[2]{%
  \@bsphack
  \csname phantomsection\endcsname 
  \def\@currentlabel{#1}{\label{#2}}%
  \@esphack
}
\title[Generic spectrum of the weighted Laplacian on finite groups]{Generic spectrum of the weighted Laplacian operator on Cayley graphs}
\author[C. F. Coletti \and
L. R. de Lima \and
D. S. de Oliveira \and
M. A. M. Marrocos]
{Cristian F. Coletti \and
Lucas R. de Lima \and
Diego S. de Oliveira \and
Marcus A. M. Marrocos}
\address{Centro de Matem\'atica, Computa\c{c}\~ao e Cogni\c{c}\~ao, Universidade Federal do ABC\\
Av. dos Estados, 5001\\
09210-580 Santo Andr\'e, S\~ao Paulo\\
Brazil.}
\email{cristian.coletti@ufabc.edu.br}
\email{lucas.roberto@ufabc.edu.br}
\email{diego.sousa@ufabc.edu.br}
\address{Departamento de Matem\'atica, Instituto de Ci\^encias Exatas, Universidade Federal do Amazonas\\
Av. Rodrigo Otávio, 6200\\
69080-900 Manaus, Amazonas\\
Brazil.}
\email{marcusmarrocos@ufam.edu.br}
\thanks{{\bf Funding:} This study was financed in part by the Coordenação de Aperfeiçoamento de Pessoal de Nível Superior - Brasil (CAPES) - Finance Code 001. It was also supported by grants \#2017/10555-0 and \#2019/19056-2 S\~ao Paulo Research Foundation (FAPESP)}
\keywords{Weighted Laplacian, Eigenvalues, Cayley graphs}
\subjclass[2010]{05C50, 47A75, 47A55}
\begin{document}

\begin{abstract}
In this paper, we investigate the spectrum of a class of weighted Laplacians on Cayley graphs and determine under what conditions the corresponding eigenspaces are generically irreducible. Specifically, we analyze the spectrum on left-invariant Cayley graphs endowed with an invariant metric, and we give some criteria for generically irreducible eigenspaces. Additionally, we introduce an operator that is comparable to the Laplacian and show that the same criterion holds.
\end{abstract} 

\maketitle

\section{Introduction}

Spectra of graphs have received much attention since its foundations were laid in the fifties and sixties of the 20\textsuperscript{th} century. The interest in spectral properties can be explained by the large number of related applied problems. In this work, the considered spectrum of a graph is the set of eigenvalues of a ``Laplacian" operator. However, there exists no canonical notion of Laplacian operator on graphs, although some of them are quite common. The Laplacian operator that we assume here is a weighted Laplacian $\Delta_w$ on graphs acting on real or complex functions defined on its vertices. The operator $\Delta_w$ applied to a function $f$ can be written as
\begin{equation*}
    \Delta_w f(x) =  -\sum\limits_{y \sim x} w(x,y)\big( f(y) - f(x) \big).
\end{equation*}
Here, $x$ and $y$ are vertices of the graph, $y \sim x$ stands for the adjacency of $y$ and $x$, \textit{i.e.}, the existence of the edge $\{x,y\}$, and $w(x,y)$ is a positive weight of $\{x,y\}$.

The basic spectral properties for this operator on finite graphs are well known in the literature. 

A remarkable and well-known result on generic properties of the Laplacian operator on Riemannian manifolds proved by Uhlenbeck \cite{uhlenbeck1976} shows that the eigenvalues are generically simple in the space of $ C^k $-Riemannian metrics. Roughly speaking, it means that up to a small perturbation of the metric the eigenvalues are always simple. Recently, many works have addressed this question on the setting of graphs, see \cite{berkolaiko2017,berkolaiko2018,friedlander2005,poignard2017}. 

Now suppose that we consider only metrics containing symmetries given by a group $G$. Then each corresponding Laplace operator commutes with such symmetries. This fact implies that every composition of an eigenfunction by a given symmetry is again an eigenfunction with the same eigenvalue. Thus, the presence of symmetries may increase the multiplicities of the eigenvalues. In this context the real eigenspaces of the Laplacian are orthogonal representations of the group $G$ and the multiplicities of the eigenvalues are at least the dimensions of such representations. This situation occurs, for instance, when in a Riemannian Manifold one restricts to metrics such that the isometry group contains a fixed group $G$. So we can not expect that the eigenvalues are generically simple in this space of metrics. However, we can ask if the spectrum is generically $G$-simple, that is, if all eigenspaces are generically irreducible representations of $G$.
More specifically, a (real or complex) linear operator $T: \mathcal{U} \to \mathcal{U}$ or its spectrum is called \textit{(real or complex)} \textit{$G$-simple} if its eigenspaces are irreducible (real or complex) representations of $G$. The question about the generic $G$-simplicity of the spectrum of a given Laplace operator has been studied in the literature in different contexts, see for instance \cite{berkolaiko2017,berkolaiko2018,marrocos2019,petrecca2019,zelditch1990}.

Zelditch \cite{zelditch1990} established the generic situation of spectrum of the Laplacian in the $G$-manifolds with a finite group $G$, under the hypothesis that dimension of $M$ is bigger than the degree of all orthogonal irreducible representation. The higher symmetry case occurs when the group $G$ acts transitively on the manifold, see \cite{schueth2017,petrecca2019}. Schueth~\cite{schueth2017} provided a completely algebraic criterion for the existence of left invariant metrics compact Lie group $G$ such that each eigenspace of the Laplacian operator has an irreducible representation under the action of $G$. In other words, the eigenspaces are $G$-simple up to small perturbations on the set of left-invariant metrics when the criterion holds, \emph{i.e.}, the eigenvalues do not exhibit higher multiplicities than the ones prescribed by the symmetries.

The generic irreducibility question  for operators on graphs has not received much atention. In our knowledge, up to now, Berkolaiko and Liu \cite{berkolaiko2018} is the only work in the setting of graphs with symmetries.  They construct a family of combinatorial Schrodinger operators on symmetric graphs on which generic irreducibility fails.  More precisely, they found a family of $G$-invariant combinatorial Laplacians on graphs where the irreducibility of the eigenspace does not occur when $G$ is the tetrahedron symmetry group. In this paper our goal is to establish the generic situation of the spectrum of the Laplacian on finite Cayley graphs having the set of weights as parameter space.  We point out that we provide families of graphs where the conjecture holds and does not hold. Our strategy consists in adapting Schueth's  method to Cayley graphs.

The strategy is to describe the Laplacian as an algebraic operator that depends only on the representations of the group. It is then possible to apply some algebraic techniques to study the multiplicities of the induced operator in each irreducible representation. 

Chung and Sternberg \cite{chung1992} considered a similar approach. They applied representation theory for Laplacians on homogeneous graphs associated with certain quadratic forms calculating their eigenvalues explicitly. They studied some general operators such as the Laplacian on vector bundles. Those operators are local and commute with the given symmetries. In their applications and examples, the authors associate the spectrum with vibrational states and modes on molecules. The distinct eigenvalues were then associated with the irreducible representations of its group of symmetries. The number of distinct eigenvalues also gives the maximum possible number of vibrational states and modes. On the other hand, one can ask if there exists an operator performing the maximum number of vibrational states, which is related to the minimal multiplicities of the eigenvalues. Here we study a subclass of homogeneous graphs, the Cayley graphs, and consider left-invariant structures by establishing a generic setting.

Let $(W,\rho)$ an arbitrary irreducible representation of a finite group $G$, \textit{i.e.}, a homomorphism $\rho:G\to\operatorname{GL}(W)$ that does not contain non-trivial subrepresentations. Set $S$ to be a symmetric generating set of $G$. In other words, $S$ generates $G$ and for all $s\in S$, $s^{-1} \in S$. Regard $S':= \big\{ \{s,s^{-1}\} : s \in S\big\}$. The left invariant weight space of a Cayley graph $\mathcal{C}(G,S)$ is 
\begin{equation} \label{eq:L_S}
    \mathcal{L}_S:=\{w = (w_{s'})_{{s'}\in S'}\in \mathbb{R}^{S'}: \forall s' \in S'(w_{s'}>0)\}\subseteq \mathbb{R}^{S'}.
\end{equation}
To simplify notation, we write $\alpha_s= \alpha_{s'}$ for all $s \in S$ and $\alpha \in \mathbb{R}^{S'}$ with $s \in s'$. Let us denote by $\mathcal{W}_S \subseteq \mathcal{L}_S$ the set of all $w \in \mathcal{L}_S$ such that $\Delta_w$ is real $G$-simple.

For each $\alpha \in \mathbb{R}^{S'}$, we define a linear map $D_W(\alpha): W \to W$ given by
\[
D_W(\alpha) =  - \sum\limits_{s \in S} \alpha_s \big(\rho(s) - \rho(e) \big)
\]
and denote its characteristic polynomial by $P_W(\alpha)$. We describe how the weighted Laplacian $\Delta_w$ associated with an arbitrary weight $w \in \mathcal{L}_S$ can be completely characterized by the operators $D_W(w)$. So, to prove that the spectrum of the weighted Laplacian is generically $G$-simple on $\mathcal{C}(G,S)$, the strategy is to minimize the multiplicities of the eigenvalues associated with the operators $D_W(w)$.

In order to achieve this goal, we use a map
$\text{res}: \complexset[x] \times \complexset[x] \longrightarrow \complexset$   which stands for the \textit{resultant} and it satisfies the following properties:
\begin{enumerate}
	\item[(i)] Given a pair of polynomials $p = \sum\limits_j a_jx^j $ and $q = \sum\limits_k b_k x^k $ then $\text{res}(p,q) = \sum\limits_{j,l,k,m} c_{j,l,k,m} a_j^l b_k^m $ for a finite number of coefficients $c_{j,l,k,m}$.
	\item[(ii)] $\text{res}(p,q) \neq 0 \Leftrightarrow p $ and $q$ do not share any common zero.
\end{enumerate} 
The existence of the resultant is a well-known fact (see \cite{gelfand1994} for more details).
Let $W, W_1$ and $W_2$ irreducible representations of $G$ and $\alpha \in \mathbb{R}^{S'}$. Denote by $P_W'(\alpha)$ and by $P_W''(\alpha)$ the first and second derivatives of the polynomial $P_W(\alpha)$, respectively. Consider the functions
\[
a_{W_1,W_2},b_{W}, c_W: \mathbb{R}^{S'} \longrightarrow \complexset
\]
given by
\begin{enumerate}
	\item[(i)] $a_{W_1,W_2}(\alpha) = \text{res}(P_{W_1}(\alpha), P_{W_2}(\alpha))$
	\item[(ii)] $b_{W}(\alpha) = \text{res}(P_{W}(\alpha), P_{W}'(\alpha))$
	\item[(ii)] $c_{W}(\alpha) = \text{res}(P_W(\alpha), P_W''(\alpha))$
\end{enumerate}
In a few words, $a_{W_1,W_2}$ detects if two distinct representations $W_1$ and $W_2$ produce the same eigenvalue or not. The maps $b_W$ and $c_W$ indicates if $W$ produces eigenvalues with multiplicity $1$ and $2$, respectively, or not.

To study the representations of $G$ is the same as to study the set of equivalence classes of complex irreducible representations of $G$ which we denote by $\operatorname{Irr}(G,\complexset)$. The set $\operatorname{Irr}(G,\complexset)$ can be partitioned into three subsets. They are $\operatorname{Irr}(G,\complexset)_{\R}$ (representations of real type), $\operatorname{Irr}(G,\complexset)_{\quatset}$ (representations of quaternionic type) and $\operatorname{Irr}(G,\complexset)_{\complexset}$ (representations of complex type). The idea is that representations of real and quaternionic type are related to representations with the set of scalars taken as $\R$ and $\quatset$ respectively, up to a process of extension or restriction on these scalars. The representations that are not of real or quaternionic types are said to be of complex type. It is due to this process that the eigenvalues provided by the representations of quaternionic type will always have at least multiplicity $2$. Another aspect regarding the representations of complex type is that it may produce the same eigenvalues as its dual. Fortunately, those properties do not produce obstacles when we consider only real-valued eigenfunctions. For more details on representations of these types see Section \ref{main results}.

We can now state one of the main results of this paper, the irreducibility criterion for Cayley graphs.

\begin{theorem} \label{thm:Cayley!generic}
	A finite Cayley graph $\mathcal{C}(G,S)$ admits a weight $w \in \mathcal{L}_S$ such that $\Delta_w$ is real $G$-simple if, and only if, the following conditions are simultaneously satisfied
	\begin{enumerate}
		\item[(i)] $a_{W_1,W_2} \not\equiv 0$ for all $W_1,W_2 \in \operatorname{Irr}(G,\complexset)$ with $W_1, W_1^* \neq W_2$.
		
		\item[(ii)] $b_W \not\equiv 0$ for all $W \in \operatorname{Irr}(G,\complexset)_{\R, \C}$.
		
		\item[(iii)]$c_W \not\equiv 0$ for all $W \in \operatorname{Irr}(G,\complexset)_{\quatset}$. 
	\end{enumerate}
	
	Moreover, the existence implies that $\mathcal{W}_S$, the set of all invariant weights $w \in \mathcal{L}_S$ that turns $\Delta_w$ into a real $G$-simple operator, is a residual set in $\mathcal{L}_S$.
\end{theorem}

The main advantage of substituting our original problem by the verification of items (i), (ii) and (iii) is that they are much more flexible.  Basically, we only need to verify that the maps $a_{W_1,W_2}$,  $b_W$ and $c_W$ do not vanish identically, when we consider the entire set $\operatorname{Irr}(G,\C)$. Since $a_{W_1,W_2}$, $b_W$ and $c_W$ are defined on $\mathbb{R}^{S'}$, it is much easier to find suitable elements $\alpha \in \mathbb{R}^{S'}$ such that (i), (ii) and (iii) hold instead of finding elements $w \in \mathcal{L}_S$ satisfying the same items. That is, it is much more advantageous to have at our disposal the entire family of operators $D_W(\alpha)$ to test the criterion instead of the family of Laplacians. Operators like $D_W(\alpha)$ with $\alpha \in \mathbb{R}^{S'}$ can be easier to describe. However, sometimes it can be hard to study the eigenvalues of the operators like $\Delta_w$. Examples presented in Section \ref{main results} illustrate how we may conveniently choose the operators of the form $D_W(\alpha)$, which are not necessarily Laplacians. We also introduce a new operator $\Lw$ motivated by the Laplacian on Lie groups for which we verify similar results in Section \ref{sec.Lw}. We apply the criteria obtained to several examples.

This paper is organized as follows. Section \ref{preliminaries} introduces the fundamental definitions and properties of weighted Laplacians on Cayley graphs. In Section 3, we prove Theorem \ref{thm:Cayley!generic}, the Schueth's criterion for Cayley graphs, and we provide several examples to demonstrate its application. Lastly, in Section 4, we define a new operator, denoted as $\Lw$, that is similar to the weighted Laplacian. We then confirm that an equivalent criterion holds for the generic spectrum of $\Lw$.

\section{Preliminaries}\label{preliminaries}
In this section, we define the essential elements to be studied.  We begin by setting up the notation and terminology for some elements of graph theory. 

Here, a \textit{graph} $\mathcal{G}=(V,E)$ consists of a non-empty set of vertices $V$ and a set of edges $E$ whose elements are $\{x,y\}$ with $x,y\in V$. We denote by $x\sim y$ the existence of the edge $\{x,y\}\in E$. A \textit{graph automorphism} is a bijection $F: V \to V$ such that, for all $\{x, y\} \in E$, one has $F(y) \sim F(x)$. The \textit{weight} of an edge $\{x,y\} \in E$ is given by a function $w: E \to (0,+\infty)$. For simplicity of notation we write $w(x,y)$.

We define below Cayley graphs, the central discrete structure studied in this article. They determine a class of graphs with particular properties of our interest in the study of the Laplacian operators, as disclosed in the next sections.

\begin{definition}[Cayley graph]
    Let $(G,.)$ be a group and let $S \subseteq G$. The left-invariant Cayley graph $\mathcal{C}(G,S)= (V,E)$ is defined by
    \[
        V=G \quad \text{and} \quad E= \big\{ \{x,xs\}: x \in G, s \in S\big\}.
    \]
\end{definition}

Throughout this paper, we regard $S$ as a symmetric generating set of $G$ such that $e \not\in S$, where $e$ is the identity element of the finite group $G$. Recall the definitions of $S'$, $\mathcal{L}_S \subseteq \mathbb{R}^{S'}$ and $\alpha_s$ given in the introduction. 

The left invariant weights along the edges are characterized by $S$ fixing $w_s = w_{s^{-1}}>0$ for each $s \in S$ and setting $w(x,xs)=w_s$ for all $x \in G$. We can now regard $w\in \mathbb{R}^{S'}$ and the set of all left invariant weights $w$ associated with $\mathcal{C}(G,S)$ is $\mathcal{L}_S \subseteq \mathbb{R}^{S'}$. Let $\mathcal{W}_S \subseteq \mathcal{L}_S$ be the set of all $\alpha \in \mathcal{L}_S$ such that $\Delta_\alpha$ is real $G$-simple.

Set $\A_{\K}$ to be the $\K$-algebra of the functions $f:V\to\K$ where $\K$ is the field $\R$ or $\C$ of real or complex numbers. When there is no need to specify $\K$, we simply write $\mathcal{A}$. Observe that every $f \in \A$ can be written as $f=\sum_{x \in V}f(x)\delta_x$ with $\delta_x(y)=\delta^x_y$ where $\delta$ stands for the Kronecker delta.

Let us define the inner product $\langle\cdot,\cdot\rangle$ on $\mathcal{A}$ by $\langle f,g \rangle:= \sum_{x \in G}f(x)\overline{g(x)}$, for all $f,g \in \mathcal{A}$. We verify that $\Delta_w$ is symmetric with respect to $\langle\cdot,\cdot\rangle$.

\begin{lemma} \label{lm:Laplacian.symmetric}
    Let $G$ be a finite group with $S$ a symmetric generating set. Then, for all $w \in \mathcal{L}_S'$ and all $f,g \in \mathcal{A}$,
    \[\langle \Delta_w f, g\rangle = \langle f,\Delta_w g\rangle.\]
\end{lemma}
\begin{proof}
    Recall that $\Delta_w f =  -\sum_{x \in G}\sum_{s \in S} w_s\big( f(xs) - f(x) \big)\delta_x$, $s^{\pm1} \in S$, and $w_s=w_{s^{-1}}$. Hence, since $G$ acts transitively on itself,
    \begin{align*}
        \langle\Delta_w f,g\rangle &= -\sum_{x \in G} \sum_{s \in S} w_s\big( f(xs) - f(x) \big)\overline{g(x)} \\
        &= -\sum_{y \in G} \sum_{s \in S} w_{s^{-1}} f(y) \overline{g(ys)} + \sum_{x \in G} \sum_{s \in S}w_{s^{-1}}f(x)\overline{g(x)} \\
        &= \sum_{x \in G} f(x)\left(\overline{-\sum_{s \in S} w_{s}\left(g(xs) -g(x)\right)}\right) = \langle f, \Delta_w g\rangle. 
    \end{align*}
\end{proof}

\section{\texorpdfstring{Criterion for $G$-simplicity on  Cayley graphs}{Criterion for {\it G}-simplicity on  Cayley graphs}}\label{main results}

We will restrict our attention to study the generic irreducibility property of the eigenspaces of a class of Laplacian operators $\Delta_w$ on finite Cayley graphs. Our goal is to establish an analogous criterion to the one obtained by Schueth \cite{schueth2017}, namely Theorem \ref{thm:Cayley!generic}. We will follow the notation used in \cite{schueth2017}, adapting it to our purposes if necessary. 

Throughout this section, consider $\mathcal{C}(G,S)$ a finite and connected Cayley graph with symmetric subset of generators $S \subset G$ such that $e \notin S$. Take $\mathcal{A}$ as the complex function space $L^2(G,\C)$, $\mathcal{A}_{\R}$ as the real function space $L^2(G,\R)$,  and $\mathcal{L}_S$ the left-invariant weight space as in equation \ref{eq:L_S}.

A representation of the group $G$ on a vector space $W$ is a group homomorphism $\rho: G \to \operatorname{GL}(W,\K)$. Let us define for each $y \in G$ the function $r_y \in \mathcal{A}$ to be such that $r_y(x)= xy$ for all $x \in G$. The right regular representation $R$ on $\A$ is given by $R(y)(f) = f \circ r_y$. Therefore, one can easily see that
\begin{equation} \label{laplacian-R}
    \Delta_w f = -\sum\limits_{x\in G; \; s \in S} w_s \big(f \circ r_s - f \big)(x)\delta_x.
\end{equation}

The left regular representation $L$ on $\A$ is given by $L(y)(f) = f \circ \ell_{y^{-1}}$ where $\ell_{y^{-1}}(x)=y^{-1}x$. Let $\mathcal{U}$ a subspace of $\A$. A $\K$-linear operator $T: \mathcal{U} \to \mathcal{U}$ is called \textit{$G$-simple} if its eigenspaces are irreducible subrepresentations of $(\A,L)$. If $\K = \R$ we call it \textit{real $G$-simple}. 

\begin{remark}
Our goal is to establish conditions in order to determine when $\Delta_w$ is a real $G$-simple operator on the given Cayley graph. 
\end{remark}

$\operatorname{Irr}(G,\C)$ will denote the set of nonequivalent irreducible representations of $G$.

 Let $(W,\rho) \in \operatorname{Irr}(G,\C)$. We know from representation theory of compact groups that there exists a $G$-isomorphism $\varphi_W: W^* \otimes W \to \operatorname{Im}(\varphi_W) \subseteq \A$ such that, for all $x \in G$, one has
\begin{equation} \label{isomorphism S}
id \otimes \rho(x) = \varphi_W^{-1} \circ R(x) \circ  \varphi_W
\end{equation}
(see for instance \cite[Ch.~III]{broecker1995} and \cite{schueth2017}).

$\operatorname{Im}(\varphi_W)$ is denoted by $\operatorname{I}(W)$ and it is called the \textit{$W$-isotypical component} of the right regular representation, which coincides with the $W^*$-isotypical component of the left regular representation. This a consequence of the Peter-Weyl theorem applied to the right and left regular representations.

Now, we define the types of complex representations.

\begin{definition}
Let $(W, \rho)$ be a irreducible complex representation of $G$. We say that $W$ is of \textit{real type} (respectively of \textit{quaternionic type}) if it admits a conjugate linear $G$-map $J: W \longrightarrow W$ such that $J^2 = id$ (respectively $J^2 = -id$). If $W$ is not of one of those types, it is called of \textit{complex type}.
\end{definition}

We mention that for each $W \in \operatorname{Irr}(G, \C)$ there exists an irreducible real $G$-representation $W_{\R}$ such that 
\begin{equation} \label{eq:real-form-representations}
    \C \otimes W_{\R} \simeq \left\{ \begin{array}{l}
     \text{$W$, if $W$ is of real type,} \\
     \text{$W \oplus W^*$, if $W$ is of complex or quaternionic type}
\end{array} \right. 
\end{equation}
(see \cite[Sec.2]{schueth2017} or \cite[Ch.II, Sec. 6]{broecker1995}).

We will denote by $\operatorname{Irr}(G, \C)_{\mathbb{D}}$ the irreducible representations of type $\mathbb{D}$.

Let $W \in \operatorname{Irr}(G,\complexset)$. The subrepresentation $C_W$ is given by
\[
C_W := \left\{ \begin{array}{ccc}
\operatorname{I}(W) & \leftrightarrow & W\in \operatorname{Irr}(G;\mathbb{C})_{\mathbb{R}} \\
\operatorname{I}(W) & \leftrightarrow & W\in \operatorname{Irr}(G;\mathbb{C})_{\mathbb{H}} \\
\operatorname{I}(W)\oplus \operatorname{I}(W^*) & \leftrightarrow & W\in \operatorname{Irr}(G;\mathbb{C})_{\mathbb{C} \, .}
\end{array} \right.
\]
We also define $\mathcal{E}_W:= C_W \cap \A_{\R}$.

\begin{proposition} \label{W to isotypical}
For the Cayley graph $\mathcal{C}(G,S)$, let $(W,\rho) \in \operatorname{Irr}(G, \C)$ be a complex irreducible subrepresentation of $(\A,R)$. Then, for a given $w \in \mathcal{L}_S$, we have $\Delta_w(W)\subset W$ and 
\[
id \otimes \Delta_w^W = \varphi_W^{-1} \circ \Delta_w|_{\operatorname{I}(W)} \circ  \varphi_W,
\]
where the linear operator $\Delta_w^W: W \to W$ is given by
\begin{equation*} 
    \Delta_w^W := {-} \sum\limits_{s\in S} w_s \big(\rho(s) - id \big) \,.
\end{equation*}
As an immediate consequence, we conclude that, up to isomorphisms, the spectrum of the restricted operator $\Delta_w|_{I(W)}$ coincides with the spectrum of
\[
id \otimes \Delta_w^W:  W^* \otimes W \to W^* \otimes W \, .
\]
Moreover, up to multiplicities, the eigenvalues of $\Delta_w|_{I(W)}$ and $\Delta_g^W$ are the same.
\end{proposition}

\begin{proof}
The result desired follows directly from \eqref{laplacian-R} and \eqref{isomorphism S}.
\end{proof}

We remember the Peter-Weyl decomposition for the complex function space $\A = L^2(G, \C)$, that is,
\[
\A \simeq \bigoplus_{(W,\rho) \in \operatorname{Irr}(G,\C)} W^* \otimes W \, .
\]
Proposition \ref{W to isotypical} basically states that we can identify $\Delta_w$ by using this decomposition as the sum operator
\[
\bigoplus_{(W,\rho)\in \operatorname{Irr}(G,\C)} id \otimes \Delta_w^W: \bigoplus_{(W,\rho) \in \operatorname{Irr}(G,\C)} W^* \otimes W \to \bigoplus_{(W,\rho) \in \operatorname{Irr}(G,\C)} W^* \otimes W \, .
\]

As a consequence, we present below three corollaries, each pertaining to a distinct type of representation. It is worth noting that if $\lambda$ is an eigenvalue of $\Delta_w|_{C_W}$, then it is also an eigenvalue of $\Delta_w^W$.

\begin{corollary} \label{corollary:eigenspace-real-type} For the Cayley graph $\mathcal{C}(G,S)$, let $(W,\rho) \in \operatorname{Irr}(G, \C)_{\R}$ be an irreducible subrepresentation of real type in $(\A,R)$ and $w \in \mathcal{L}_S$. Let $\lambda$ be an eigenvalue of $\Delta_w|_{C_W}$. Consider $(C_W)_{\lambda}$ and $(\mathcal{E}_W)_{\lambda}$ to be the eigenspaces of $\Delta_w|_{C_W}$ and $\Delta_w|_{\mathcal{E}_W}$, respectively. Suppose that $m(\lambda)$ is the multiplicity of $\lambda$ for the operator $\Delta_w^W$. Then, 
\[
(C_W)_{\lambda} \simeq W^* \otimes \C^{m(\lambda)} \simeq W^{\oplus m(\lambda)}\, . 
\]
In particular, $\Delta_w|_{\mathcal{E}_W}$ has a real $G$-simple spectrum if, and only if, $\Delta_w^W$ has simple spectrum. 
\end{corollary}

\begin{proof}
For $W$ of real type, we have $C_W = I(W)$ and $W \simeq W^*$. Since $m(\lambda)$ is the multiplicity of the $\lambda$-eigenvalue of $\Delta_w^W$, then its corresponding eigenspace is isomorphic to $\C^{m(\lambda)}$. By Proposition \ref{W to isotypical}, the $\lambda$-eigenspace of $\Delta_w |_{C_W}$ coincides (up to isomorphisms) with the $\lambda$-eigenspace of 
\[
id \otimes \Delta_w^W:  W^* \otimes W \to W^* \otimes W \, ,
\]
which is given by $W^* \otimes \C^{m(\lambda)}$. Since $W^* \otimes \C^{m(\lambda)}$ is isomorphic to the $G$-module $ (W^*)^{\oplus m(\lambda)}$, and $W \simeq W^*$ for $W$ of real type, we conclude the first part of the proof.

We will now proceed to prove the second part of the corollary. For each eigenvalue $\lambda$ of $\Delta_w|_{C_W}$,  fix a real irreducible representation $W_{\R}$ such that $\C \otimes W_{\R} \simeq W$, as in equation \ref{eq:real-form-representations}. Since the complex $G$-module $W$ is the simple complexification of the real $G$-module $W_{\R}$ and $\Delta_w|_{\mathcal{E}_W}$ is the real version of $\Delta_w|_{C_W}$, then the $\lambda$-eigenspace of $\Delta_w|_{\mathcal{E}_W}$ is isomorphic to  $W_{\R}^{\oplus m(\lambda)}$. In particular, $\Delta_w|_{\mathcal{E}_W}$ has a real $G$-simple spectrum if, and only if, $m(\lambda) = 1$ for each eigenvalue $\lambda$, which is the same to say that $\Delta_w^W$ has simple spectrum. 
\end{proof}

\begin{corollary} \label{corollary:eigenspace-complex-type} For the Cayley graph $\mathcal{C}(G,S)$, let $(W,\rho) \in \operatorname{Irr}(G, \C)_{\C}$ be an irreducible subrepresentation of complex type in $(\A,R)$ and $w \in \mathcal{L}_S$. Let $\lambda$ as an eigenvalue of $\Delta_w|_{C_W}$. Consider $(C_W)_{\lambda}$ and $(\mathcal{E}_W)_{\lambda}$ to be the eigenspaces of $\Delta_w|_{C_W}$ and $\Delta_w|_{\mathcal{E}_W}$, respectively. Suppose that $m(\lambda)$ is the multiplicity of $\lambda$ for the operator $\Delta_w^W$. Then, 
\[
(C_W)_{\lambda} \simeq (W \oplus W^*) \otimes \C^{m(\lambda)} \simeq (W \oplus W^*)^{\oplus m(\lambda)}\, . 
\]
In particular, $\Delta_w|_{\mathcal{E}_W}$ has a real $G$-simple spectrum if, and only if, $\Delta_w^W$ has simple spectrum. 
\end{corollary}

\begin{proof}
For $W$ of complex type, we have $C_W = I(W)\oplus I(W^*)$. Since $m(\lambda)$ is the multiplicity of the $\lambda$-eigenvalue of $\Delta_w^W$, then its corresponding eigenspace is isomorphic to $\C^{m(\lambda)}$. We will now prove that
\begin{equation} \label{eq:comutatividade-laplaceW-laplaceW*}
\eta \circ \Delta_w^W = \Delta_w^{W^*} \eta, \quad \forall \eta \in W^* 
\end{equation}
Recall that, for each $s \in S$, we have $w_s = w_{s^{-1}}$. Thus, for every $v \in W$,
\[
\begin{array}{ccl}
\eta \circ \Delta_w^W(v)  &=& \eta\big( \Delta_w^W v \big)  \\
     &=&  - \eta \left( \sum\limits_{s\in S} w_s \big(\rho(s)v - v \big) \right) \\
     &=&  -  \sum\limits_{s\in S} w_s\big( \, \eta(\rho(s)v) - \eta(v)  \, \big) \\
     &=&  - \sum\limits_{s\in S} w_s\big( \, \eta(\rho(s)v) - \eta(v)  \, \big) \\
     &=&  - \sum\limits_{s\in S} w_s\big( \, (\rho^*(s^{-1})\eta) (v) - \eta(v)  \, \big) \\
     &=&  - \sum\limits_{s\in S} w_{s^{-1}}\big( \, \rho^*(s^{-1}) - id  \, \big) (\eta)(v) \\
     &=& (\Delta_w^{W^*}\eta)(v) 
\end{array}
\]

Equation \ref{eq:comutatividade-laplaceW-laplaceW*} implies that $\Delta_w^W$ and $\Delta_w^{W^*}$ share the same eigenvalues with their respective multiplicities. Therefore, the $\lambda$-eigenspace of $\Delta_w^{W^*}$ is also isomorphic to $\C^{m(\lambda)}$.

By Proposition \ref{W to isotypical}, the $\lambda$-eigenspace of $\Delta_w |_{C_W}$ coincides (up to isomorphisms) with the $\lambda$-eigenspace of 
\[
(id \otimes \Delta_w^W) \oplus (id \otimes \Delta_w^{W^*}):  (W^* \otimes W)\oplus (W \otimes W^*) \to (W^* \otimes W)\oplus (W \otimes W^*)\, ,
\]
which is given by $(W^* \otimes \C^{m(\lambda)}) \oplus (W \otimes \C^{m(\lambda)}) $. Thus, the $\lambda$-eigenspace of $\Delta_w |_{C_W}$ coincides with $(W \oplus W^*) \otimes \C^{m(\lambda)} \simeq (W \oplus W^*)^{\oplus m(\lambda)}$, thereby concluding the initial segment of the proof.

Let us prove the second part of the statement. For each eigenvalue $\lambda$ of $\Delta_w|_{C_W}$,  fix a real irreducible representation $W_{\R}$ such that $\C \otimes W_{\R} \simeq (W \oplus W^*)$, as in equation \ref{eq:real-form-representations}. Since the sum of complex $G$-modules $(W \oplus W^*)$ is the complexification of a single  real $G$-module $W_{\R}$ and $\Delta_w|_{\mathcal{E}_W}$ is the real version of $\Delta_w|_{C_W}$, then the $\lambda$-eigenspace of $\Delta_w|_{\mathcal{E}_W}$ is isomorphic to  $W_{\R}^{\oplus m(\lambda)}$. In particular, $\Delta_w|_{\mathcal{E}_W}$ has a real $G$-simple spectrum if, and only if, $m(\lambda) = 1$ for each eigenvalue $\lambda$, which is the same to say that $\Delta_w^W$ has simple spectrum. 
\end{proof}

\begin{corollary} \label{corollary:eigenspace-quaternionic-type} For the Cayley graph $\mathcal{C}(G,S)$, let $(W,\rho) \in \operatorname{Irr}(G, \C)_{\mathbb{H}}$ be an irreducible subrepresentation of quaternionic type in $(\A,R)$ and $w \in \mathcal{L}_S$. Let $\lambda$ be an eigenvalue of $\Delta_w|_{C_W}$. Consider $(C_W)_{\lambda}$ and $(\mathcal{E}_W)_{\lambda}$ to be the eigenspaces of $\Delta_w|_{C_W}$ and $\Delta_w|_{\mathcal{E}_W}$, respectively. Suppose that $m(\lambda)$ is the multiplicity of $\lambda$ for the operator $\Delta_w^W$. Then, $m(\lambda)$ is even and
\[
(C_W)_{\lambda} \simeq (W \oplus W) \otimes \C^{\, m(\lambda)/2} \simeq (W \oplus W)^{\oplus \, m(\lambda)/2}\, . 
\]
In particular, $\Delta_w|_{\mathcal{E}_W}$ has a real $G$-simple spectrum if, and only if, each eigenvalue of $\Delta_w^W$ has multiplicity $2$. 
\end{corollary}

\begin{proof}
For $W$ of quaternionic type, we have $C_W = I(W)$. Since $m(\lambda)$ is the multiplicity of the $\lambda$-eigenvalue of $\Delta_w^W$, then its corresponding eigenspace is isomorphic to $\C^{m(\lambda)}$. Considere the operator $\Delta_w^{W\oplus W}$ given by
\[
\Delta_w^{W\oplus W} := - \sum\limits_{s \in S} w_s \big( \, (\rho \oplus \rho)(s) - id \, \big): W\oplus W \to W\oplus W 
\]
It is easy to show that $\Delta_w^{W\oplus W}$ coincides with the product operator $\big( \Delta_w^W(\cdot)\, , \, \Delta_w^W(\cdot) \big)$.

It is well-known from the properties of representations of quaternionic type that we can choose the structural quaternionic map $j:W \oplus W \to W \oplus W$ as
\[
j(u,v) := (-v,u)\, , \quad \forall u,v \in W 
\]
(see more in \cite[Ch.2, Sec.6]{broecker1995} ). Note that if $u$ is a $\lambda$-eigenvector of $\Delta_w^W$, then 
\[
\begin{array}{ccl}
    \Delta_w^{W \oplus W} \, j(u,0) &=& \Delta_w^{W \oplus W} (0,u) \\
     &=& (0, \Delta_w^W u) \\
     &=& (0, \lambda u) \\
     &=& \lambda (0, u) \\
     &=& \lambda \, j(u,0)\,. \\
\end{array}
\]
In particular, $(u,0)$ is an $\lambda$-eigenvector if, and only if, $j(u,0)$ is an $\lambda$-eigenvector. Similarly, $(0,v)$ is an $\lambda$-eigenvector if, and only if, $j(0,v)$ is an $\lambda$-eigenvector. This $j$-invariance of the $\lambda$-eigenspace implies that $m(\lambda)$ is even. Therefore, we can write $(W\oplus W)^{\oplus \, m(\lambda)/2} = W^{\oplus m(\lambda)}$.

By Proposition \ref{W to isotypical}, the $\lambda$-eigenspace of $\Delta_w |_{C_W}$ coincides (up to isomorphisms) with the $\lambda$-eigenspace of 
\[
id \otimes \Delta_w^W:  W^* \otimes W \to W^* \otimes W\, ,
\]
which is given by $W^* \otimes \C^{m(\lambda)} $. Since $W\simeq W^*$ for $W$ of quaternionic type, then $W^* \otimes \C^{m(\lambda)} \simeq (W^*)^{\oplus m(\lambda)}$ is isomorphic to the $G$-module $ (W \oplus W)^{\oplus \, m(\lambda)/2}$, which concludes the first part of the demonstration.

We now turn to the proof of the second part of the corollary. For each eigenvalue $\lambda$ of $\Delta_w|_{C_W}$,  fix a real irreducible representation $W_{\R}$ such that $\C \otimes W_{\R} \simeq (W \oplus W^*)$, as in equation \ref{eq:real-form-representations}. Thus, $\C \otimes W_{\R} \simeq (W \oplus W)$. Since the sum of complex $G$-modules $(W \oplus W)$ is the complexification of a single  real $G$-module $W_{\R}$ and $\Delta_w|_{\mathcal{E}_W}$ is the real version of $\Delta_w|_{C_W}$, then the $\lambda$-eigenspace of $\Delta_w|_{\mathcal{E}_W}$ is isomorphic to  $W_{\R}^{\oplus \, m(\lambda)/2}$. In particular, the operator $\Delta_w|_{\mathcal{E}_W}$ has a real $G$-simple spectrum if, and only if, $m(\lambda) = 2$ for each eigenvalue $\lambda$. 
\end{proof}

The three precedent corollaries imply:

\begin{corollary}
	For the Cayley graph $\mathcal{C}(G,S)$, let $w \in \mathcal{L}_S$ be a left-invariant weight. Then $\Delta_w$ is real $G$-simple iff the following statements are simultaneously satisfied
	\begin{enumerate}
		\item[(i)] For all $W_1,W_2 \in \operatorname{Irr}(G, \complexset)$ with $W_1,W_1^* \ncong W_2$,  $\Delta_w^{W_1}$ and $\Delta_w^{W_2}$ do not share any common eigenvalue.
		
		\item[(ii)] For each $W \in \operatorname{Irr}(G, \complexset)_{\R, \C}$, all eigenvalues of $\Delta_w^W$ have multiplicity one.
		
		\item[(iii)] For each $W \in \operatorname{Irr}(G, \complexset)_{\quatset}$, all eigenvalues of $\Delta_w^W$ have multiplicity two.
	\end{enumerate}
	\label{corolario 3.1 - Schueth}
	
\end{corollary}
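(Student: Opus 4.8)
The plan is to deduce the corollary directly from the preceding Lemma by translating the condition ``$\Delta_w$ is real $G$-simple'' into statements about the isotypical decomposition of the left regular representation $(\A_\R, L)$. First I would recall that the $L$-action commutes with $\Delta_w$ (this follows from the expression for $\Delta_w$ in terms of the right regular representation $R$ in \eqref{laplacian-R}, since left and right translations commute), so each eigenspace of $\Delta_w$ on $\A_\R$ is a real subrepresentation of $(\A_\R, L)$. Being ``real $G$-simple'' means each such eigenspace is irreducible over $\R$. The key structural fact is that $\A_\R$ (equivalently $\A_\C$) decomposes as an orthogonal direct sum of the pieces $\mathcal{E}_W$ (resp.\ $C_W$), where $W$ ranges over a set of representatives partitioning $Irr(G,\C)$ into singletons $\{W\}$ for $W$ of real or quaternionic type and pairs $\{W, W^*\}$ for $W$ of complex type; this is exactly the content of part (i) of the Lemma together with standard representation theory. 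Moreover $\Delta_w$ preserves each $C_W$ (by Proposition~\ref{laplacian W} it preserves each $I(W)$, hence preserves $C_W$), so its spectral analysis decouples across these blocks.

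Next I would argue the equivalence block by block. Since $\Delta_w$ respects the decomposition $\A_\R = \bigoplus_W \mathcal{E}_W$, every eigenspace of $\Delta_w$ on $\A_\R$ is the direct sum of its intersections with the various $\mathcal{E}_W$; such an eigenspace is irreducible only if it lies entirely inside a single $\mathcal{E}_W$ \emph{and} is irreducible there. This gives the ``decoupling'' half: the failure of irreducibility can come either from an eigenvalue shared between two different blocks $\mathcal{E}_{W_1}, \mathcal{E}_{W_2}$ (with $W_1, W_1^* \ncong W_2$), which is precisely the negation of (i); or from a single block $\mathcal{E}_W$ contributing a reducible eigenspace, which by part (ii) of the Lemma is governed by the multiplicity condition on eigenvalues of $\Delta_w^W$: multiplicity one is required in the real and complex cases, multiplicity two in the quaternionic case — the negations of (ii) and (iii) respectively. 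Conversely, if (i), (ii), (iii) all hold, then within each block $\mathcal{E}_W$ the Lemma part (ii) tells us every eigenspace of $\Delta_w|_{\mathcal{E}_W}$ is in $Irr(G,\R)$, and condition (i) ensures distinct blocks never merge eigenvalues, so every global eigenspace coincides with an eigenspace of some single $\Delta_w|_{\mathcal{E}_W}$ and is therefore irreducible. Hence $\Delta_w$ is real $G$-simple.

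I would organize this as: (1) note $L$-invariance of eigenspaces and the $\Delta_w$-invariant orthogonal decomposition $\A_\R = \bigoplus \mathcal{E}_W$; (2) observe that a global eigenspace is irreducible iff it is contained in a single $\mathcal{E}_W$ and irreducible there; (3) apply the Lemma to each $\mathcal{E}_W$ to convert ``irreducible there'' into the multiplicity conditions (ii)/(iii), splitting on the type of $W$; (4) convert ``contained in a single block'' into the no-common-eigenvalue condition (i), being careful that the relevant partition groups $W$ with $W^*$ so that the complex-type pair $I(W)\oplus I(W^*)$ counts as one block. The main obstacle — really the only subtle point — is bookkeeping in step (4): one must correctly identify which pairs of irreducibles can legitimately share eigenvalues without destroying irreducibility (namely $W$ and $W^*$, since $\Delta_w^W$ and $\Delta_w^{W^*}$ are conjugate and their isotypical components are already bundled into one $C_W$), versus which pairs must be separated; the hypothesis ``$W_1, W_1^* \ncong W_2$'' in (i) is exactly what encodes this, and I would make sure the argument invokes it with the right quantifiers. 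Everything else is a routine application of Schur's lemma, the isotypical decomposition, and the already-proven Lemma.
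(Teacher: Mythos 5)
Your plan is correct and is exactly the derivation the paper intends (the corollary is stated without proof as an immediate consequence of the preceding Lemma, following Schueth): decompose $\A_\R$ into the $\Delta_w$-invariant blocks $\mathcal{E}_W$, apply part (ii) of the Lemma within each block to get conditions (ii)/(iii), and use the fact that an eigenspace meeting two distinct blocks is reducible to get condition (i), with the pairing of $W$ and $W^*$ into a single block $C_W$ accounting for the hypothesis $W_1, W_1^* \ncong W_2$. No gaps; this matches the paper's (implicit) argument.
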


\subsection{Proof of the main criterion for finite Cayley graphs}

Let $(W, \rho)$ a representation, $\alpha= (\alpha_{s'})_{s'\in S'} \in \mathbb{R}^{S'}$  and $D_W(\alpha): W \longrightarrow W$ given by
\begin{equation} \label{DW operator}
D_W(\alpha) =  - \sum\limits_{s \in S} \alpha_s \big(\rho(s) - id \big)
\end{equation}

Equation \eqref{DW operator} induces a map $D_W: \mathbb{R}^{S'} \to \operatorname{End}(W)$ for each $(W, \rho) \in \operatorname{Irr}(G, \complexset)$.
In particular, from Proposition \ref{W to isotypical} one has
\[
D_W\big( w \big) = \Delta_w^W.
\]

Thus, every left-invariant weighted Laplacian operator can be studied as a $D_W(\alpha)$ for some $\alpha \in \mathcal{L}_S$. Let
\[
P_W : \mathbb{R}^{S'} \longrightarrow \complexset[x]
\]
be the application that maps $\alpha \in \mathcal{L}_S$ to the characteristic polynomial of $D_W(\alpha)$.

We can now prove the main criterion for the generic simplicity. Recall the definitions of $a_{W_1,W_2}$, $b_W$, and $c_W$ given in the introduction.

\begin{proof}[Proof of Theorem \ref{thm:Cayley!generic}]
If $w \in \mathcal{L}_S$ is such that $\Delta_w$ is real $G$-simple, then $(i)$, $(ii)$, and $(iii)$ follow immediately from Corollary \ref{corolario 3.1 - Schueth}. Now, assume that $(i)$, $(ii)$, and $(iii)$ are jointly satisfied. Note that the elements $\alpha \in \mathbb{R}^{S'} $ for that this hypothesis does not hold are exactly the inverse image at zero of any of $a_{W_1,W_2}$, or $b_W$, or $c_W$ from items $(i)$, $(ii)$, and $(iii)$. But since $\operatorname{Irr}(G, \C)$ is finite for the finite group $G$ (see \cite[Thm.~7]{serre1977}), then the inverse image of these polynomials at $0$ yields a meager finite set $\mathcal{N}$ in $\mathbb{R}^{S'}$. We also have that $\mathcal{N} \cap \mathcal{L}_S$ a meager set in $\mathcal{L}_S$. Thus the set $\mathcal{L}_S\hspace{-4pt}\setminus \mathcal{N}$ is residual in $\mathcal{L}_S$. Moreover, Corollary \ref{corolario 3.1 - Schueth} also ensures that $\mathcal{L}_S\hspace{-4pt}\setminus \mathcal{N}$ contains exactly the weights $\alpha$ that turns $\Delta_\alpha$ into a real $G$-simple operator, \textit{i.e.}, $\mathcal{W}_S = \mathcal{L}_S\hspace{-4pt}\setminus \mathcal{N}$.

\end{proof}

\begin{remark} \label{rem:Ssubset:irr}
Note that $\Delta_w$ depends on the choice of $S \subseteq G\setminus\{e\}$, and so does $a_{W_1,W,2}$, $b_{W}$ and $c_W$. However, once the the generic simplicity holds for $S$, it follows from Theorem \ref{thm:Cayley!generic} that it also holds for every $H \subseteq G\setminus\{e\}$ containing $S$.
\end{remark}

To provide a deeper understanding of the theorem, we apply the established criterion in the following examples. These examples showcase situations where the irreducibility property holds true, as well as cases where it does not. By exploring both scenarios, we can gain a more nuanced understanding of the theorem and its range of applicability.

\begin{example} \label{ex:abelian}
    Let $G$ be a \textbf{finite abelian group}. It is known that it is equivalent to the case where every $(W,\rho) \in \operatorname{Irr}(G,\C)$ has dimension $1$ (see \cite{serre1977}, \S 3.1). Denote by $(W_j,\rho_j)$, $j \in \{0,1, \dots, n-1\}$, all distinct irreducible representations of $G$. Then, one  can see that, for all $\alpha \in \mathbb{R}^{S'}$,
    \begin{equation} \label{DWj:func}
        D_{W_j}(\alpha):= -\sum_{s \in S}\alpha_s(\rho_j(s)- \rho_j(e))
    \end{equation}
    is a scalar. Set $\kappa_j$ to be the constant $\kappa_j:= D_{W_j}(\alpha)$. Therefore $P_{W_j}(\alpha)(x)=x- \kappa_j$ for all $j \in \{0,1, \dots, n-1\}$. Since the eigenvalues are simple, items $(ii)$ and $(iii)$ of Theorem \ref{thm:Cayley!generic} are satisfied.

    Thus, the generic property for the eigenspaces of $\Delta_w$ is conditioned to the existence of $\alpha \in \mathbb{R}^{S'}$ satisfying
    \begin{equation} \label{kappa:neq}
        \kappa_j \neq \kappa_k
    \end{equation}
    for all distinct $j,k \in \{0,1, \dots, n-1\}$ with $W_j^\ast \not \cong W_k$.
\end{example}

\begin{example} \label{ex:cyclic.dir}
    Let $\mathbf{C}_n= \langle r \rangle$ denote the \textbf{cyclic group} of order $n \geq 2$. We know from \cite[\S 5.1]{serre1977} that the $n$ irreducible linear representations are given by
    \[
        \rho_j(r^k) = e^{2 \pi\frac{jk}{n}i}.
    \]

    Let us now fix a symmetric generating set $S \subseteq \mathbf{C}_n\setminus\{e\}$ such that $r \in S$  with $\langle r\rangle= \mathbf{C}_n$. Set $\alpha \in \mathbb{R}^{S'}$ with $\alpha_r=\alpha_{r^{-1}}= 1$ and $\alpha_s = 0$ for $s \in S \setminus \{r^{\pm1}\}$. It follows that, for $n \geq 3$,
    \[
        \kappa_j= -\sum_{r^k \in S} \alpha_{r^k}(\omega^{jk} - 1)= 2 - 2\cos\left(\frac{j}{n}\pi\right) = \kappa_{n-j},
    \]
    while $\kappa_j= \big(1-(-1)^j\big)$ for $n=2$. The group $\mathbf{C}_n$ is abelian, Thus, since $(W_j^\ast,\rho_j^\ast) \cong (W_{n-j},\rho_{n-j})$ for all $j \in \{1, \dots, n-1\}$, the desired conclusion follows from \eqref{kappa:neq} and $\Delta_w$ is real $\mathbf{C}_n$-simple on $\mathcal{C}(\mathbf{C}_n, S)$.

    We will consider now a case where $\mathbf{C}_n$ the element $r$ is not included in the generating set. Let $n=m_1m_2$ with $m_1$ and $m_2$ two coprime numbers, then $\mathbf{C}_n =\langle r_1,r_2 \rangle \simeq \mathbf{C}_{m_1}\times\mathbf{C}_{m_2}$ where $r_k := r^{n/m_k}$. Set $H:=\{r_1^{\pm1},r_2^{\pm1}\} \subseteq L$ and $\alpha_s = 0$ for $s \in L \setminus H$. Let now
    \[
        \kappa_j^{(k)}:= \left\{\begin{array}{cl}
           2 \alpha_{r_k} \big(1-\cos(2\pi j /m_k)\big),  & \text{if } m_k\neq 2, \\
            \big(1-(-1)^{j}\big)\alpha_{r_k} & \text{if } m_k= 2.
        \end{array}\right.
    \]
    Then, $\kappa_j= \kappa_j^{(1)} + \kappa_j^{(2)}$ for $j \in \{0, 1, \dots, n-1\}$. Observe now that $\kappa_j^{(k)} = \kappa_\ell^{(k)}$ when $j = \pm \ell \mod m_k$. By the Chinese remainder Theorem, for  all $\ell_1 \in \{0, 1, \dots, m_1-1\}$ and $\ell_2 \in \{0, 1, \dots, m_2-1\}$, there exists \[j' = j''= \ell_1 \mod m_1 \quad \text{and}\quad j' = -j''= \ell_2 \mod m_2 \quad\]
    with $j',j'' \in \{0, 1, \dots, n-1\}$ unique and, since $m_1$ and $m_2$ are coprime, one has that $j' = \pm j'' \mod n$ exactly when $\ell_1=0$ or $\ell_2 =0$. It implies that one can find $\alpha \in \mathbb{R}^{L'}$ such that \eqref{kappa:neq} is satisfied. Therefore, $\mathcal{W}_S$ is residual in $\mathcal{L}_S$ for $\Delta_w$ on $\mathcal{C}(\mathbf{C}_n,L)$.
\end{example}

\begin{example} \label{ex:Klein-four}
    Let $\mathbf{K}\simeq \mathbf{C}_2\times\mathbf{C}_2$ denote the \textbf{Klein four-group}. Recall that $\mathbf{K} = \langle a,b \rangle$ consists of $4$ elements such that $a^2 = b^2 = (ab)^2 = e$. We can verify that the four linear irreducible representations of $\mathbf{K}$ are
    \begin{table}[H]
    \centering
    \begin{tabular}{l|rrrr}
                    & $e$ & $a$ & $b$ & $ab$ \\ \hline \hline
        $\rho_e$ & $1$   & $1$   & $1$   & $1$    \\
        $\rho_a$ & $1$   & $1$   & $-1$  & $-1$   \\
        $\rho_b$ & $1$   & $-1$  & $1$   & $-1$   \\
        $\rho_{ab}$ & $1$   & $-1$  & $-1$   & $1$
    \end{tabular}.
    \end{table}
    
    We consider $S$ to be a generator of $G$. Fix, without loss of generality, $r,t \in S$ such that $r^{\pm1}\neq t^{\pm1}$. We may choose $\alpha_r =1$. Set $\alpha_t=2$ and $\alpha_s=0$ for the possible remaining $s \in S \setminus \{r,t\}$. Therefore, the condition \eqref{kappa:neq} is verified and the generic property holds for every generator set $S$.
    
    \begin{figure}[H]
    \centering
    \includegraphics[scale=0.3]{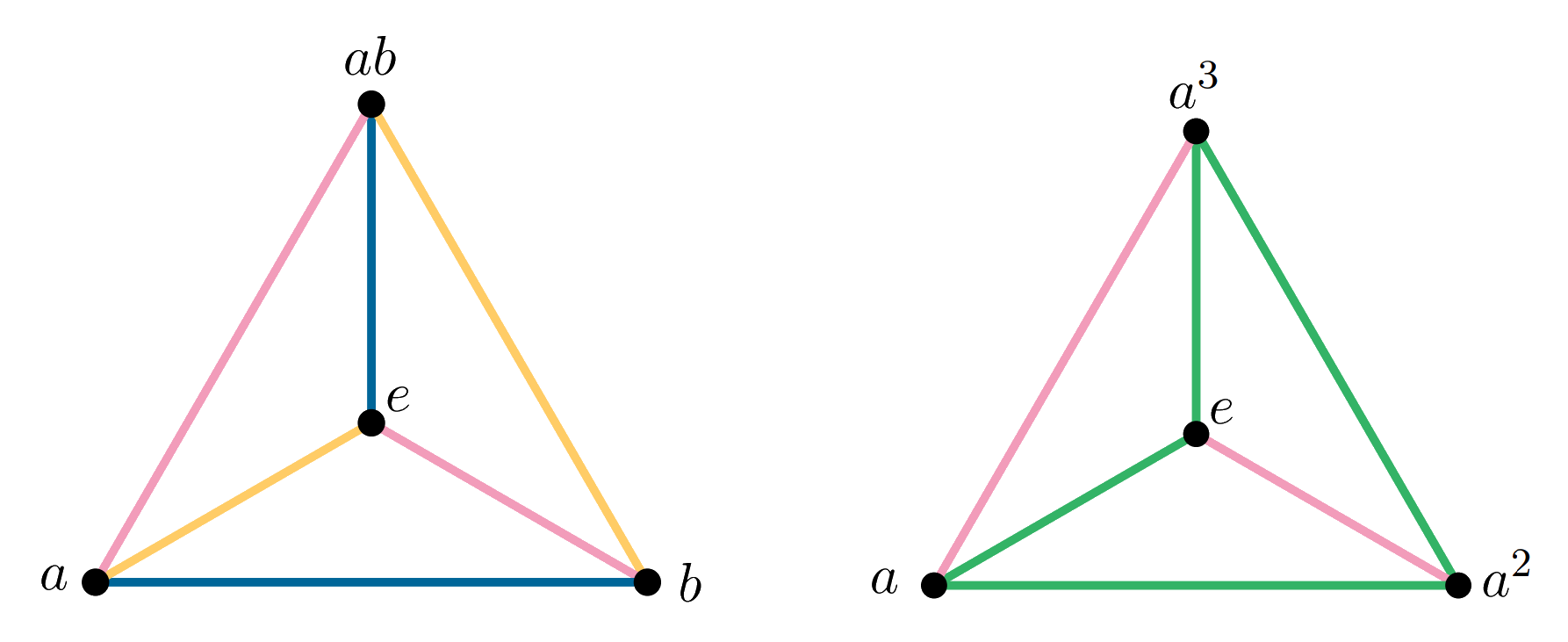}
        \caption{Tetrahedrons associated with the Cayley graphs of the Klein four-group generated by the set $S=\{a,b,ab\}$ (left) and with the cyclic group $\mathcal{C}(\mathbf{C_4}, \{r,r^2,r^3\})$ (right) (see Examples \ref{ex:cyclic.dir} and \ref{ex:Klein-four}).}
        \label{fig:tetrahedron}
    \end{figure}
\end{example}

The examples above shows us that the generic property holds for cyclic groups and $\mathbf{C}_2 \times \mathbf{C}_2$. We will exhibit an example below of an abelian group for which the real $G$-simplicity fails.  

\begin{example}
    Consider $G = \mathbf{C}_3\times \mathbf{C}_3 = \langle (r,e),(e,t) \rangle$ where $r^3=e=t^3$ and $r \neq e \neq t$. Let $\omega= \exp(2\pi i/3)$. The nine irreducible representations are given by
    \[
        \rho_{j_1,j_2}(r^{k_1},t^{k_2})= \omega^{j_1 k_1+ j_2 k_2}
    \]
    with $j_1,j_2,k_1,k_2 \in \{0,1,2\}$. Set $S= \{(r^{\pm1},e), (e, t^{\pm1})\}$. Hence,
    \[D_{W_{j_1,j_2}}(\alpha) = 2\alpha_r\big(1 - \cos(2 \pi j_1 /3)\big) + 2\alpha_t\big(1 - \cos(2 \pi j_2 /3)\big).\]

    It then follows that
    \[
        \kappa_{1,1} = \kappa_{1,2} = 3 \alpha_r + 3\alpha_t = \kappa_{2,1} = \kappa_{2,2},
    \]
    while $W_{1,1}^\ast \ncong W_{1,2}$, $W_{1,1}^\ast \simeq W_{2,2}$, and $W_{1,2}^\ast \cong W_{2,1}$. Since $G$ is abelian and \eqref{kappa:neq} does not hold, $\Delta_w$ is not real $G$-simple on $\mathcal{C}(G,S)$.
\end{example}

\begin{remark}
The example above shows us a case where we do not have the generic property. However, when $G$ is abelian we can find a weaker version of our result in such a way that we will reacquire the generic property for the Laplacian. 

In fact, the condition $w_s = w_{s^{-1}}$ is too strong when considering all functions in $\A$. We must then consider a subspace $\A_S$ of $\A$ that is more compatible with this property. More specifically, we want the identity
\begin{equation} \label{tag I}
    w_s (f(xs)-f(x))= w_{s^{-1}}(f(xs^{-1})-f(x)).
\end{equation}

If \eqref{tag I} is true, then the terms associated with $w_s$ and $w_{s^{-1}}$ will produce the same effect in the Laplacian.

Define, for each representation $(W, \rho)$, the subrepresentation

\[
\A_S^{\rho} : = \bigcap\limits_{s \in S} \text{ker} \big( \rho(s) - \rho(s^{-1}) \big).
\]
We note that condition \eqref{tag I} holds exactly for the functions $f \in \A_S^{R}$ with respect to the right regular representation on $\A$. Thus, we can define $\A_S := \A_S^R$.
 
Now we can replace $\Delta_{w'}$ by $\Delta_{w'}|_{\A_{S}^R}$ in Theorem \ref{thm:Cayley!generic}. Therefore, the subset $\mathcal{W}_S$ of $\mathcal{L}_S$ that turns $\Delta_{w'}|_{\A_{S}^R}$  into a real $G$-simple operator on $\mathcal{C}(G,S)$ is residual. Since the generic property holds on $\mathcal{A}_S$, then $\mathcal{A}_S$ has the Peter-Weyl decomposition\[\mathcal{A}_S \cong \bigoplus_{W \in \operatorname{Irr}(G,\mathbb{C})}W\]which coincides with the eigendecomposition of $\Delta_w\mid_{\mathcal{A}_S}: \mathcal{A}_S \to \mathcal{A}_S$ for a generic weight $w$.
\end{remark}

\begin{example} \label{ex:dihedral}
    Let $\mathbf{D}_n$ the \textbf{dihedral group} of order $2n$ with $n \geq 3$. We know from \cite[\S 5.3]{serre1977} that there exists at most four irreducible representations of degree $1$ (two in the case that $n$ is odd and four if $n$ is even) and the others are of degree $2$.
    
    We will consider $S$ containing a reflection $t$ and a rotation $r$ such that $\mathbf{C}_n = \langle r \rangle $ and
    \[
    \mathbf{D}_n = \mathbf{C}_n \cup t\mathbf{C}_n
    \]
    where $t\mathbf{C}_n$ denotes the coset $\{tr^k; \; k=0,\dots,n-1 \}$. Note that $\{r^{\pm 1},t\} \subseteq S$ is itself a generating set $\mathbf{D}_n$. The irreducible representations of degree $1$ satisfy
    \[
    \rho_{j,k}(r) = j  \quad \text{and} \quad \rho_{j,k}(t) = k 
    \]
    for $(j,k) \in \{ (1,1),(1,-1) \}$ if $n$ is odd and $(j,k) \in \{ (1,1),(1,-1),(-1,1),(-1,-1) \}$ if $n$ is even. We will denote these representations by $W_{j,k}$.
    
    Consider $m$ to be an integer and set and $\omega := e^{2\pi i /n}$. Then the irreducible representations of degree $2$ are $W_m = (\mathbb{C}^2, \rho_m)$ for $0<m<n/2$ with
    \[
    \rho_m(r^k) = 
    \begin{pmatrix*}[c]
         \omega^{mk}& 0  \\
         0& \omega^{-mk} 
    \end{pmatrix*}, 
    \quad \text{and} \quad
    \rho_m(tr^k) =
    \begin{pmatrix*}[c]
         0 & \omega^{-mk}  \\
         \omega^{mk}& 0 
    \end{pmatrix*}.
    \] 
     
     Let us fix $\alpha \in \mathbb{R}^{S'}$ with $\alpha_r= \alpha_{r^{-1}}=1/2$, $\alpha_t = 3$, and $\alpha_s = 0$ for $s\in S \setminus \{ r^{\pm 1},t\}$. Thus
    \[
        D_{W_{j,k}}(\alpha) = 4- \frac{1}{2}(j+j^{n-1})-3k.
    \]
    Therefore,
    \begin{itemize}
        \item $(j,k)= (1,1)$ implies that $D_{W_{j,k}}(\alpha)=0$.
        \item $(j,k)= (1,-1)$ implies that $D_{W_{j,k}}(\alpha)=6$.
        \item $n$ even and $(j,k)= (-1,1)$ implies that $D_{W_{j,k}}(\alpha)=2$.
        \item $n$ even and $(j,k)= (-1,-1)$ implies that $D_{W_{j,k}}(\alpha)=8$.
    \end{itemize}
    Observe that
     \[
     D_{W_m}(\alpha) = 
     \begin{pmatrix*}[c]
         4 -\cos(2 \pi m/n)& -3  \\
         -3 & 4 -\cos(2 \pi m/n) 
    \end{pmatrix*}
     \]
    has distinct eigenvalues $\lambda_1^{(m)}= \cos(2\pi m/n)+1$ and $\lambda_2^{(m)}= \cos(2\pi m/n)+4$. Since the cosine function is strictly decreasing in $(0,\pi)$, $\lambda_1^{(m)} \neq \lambda_1^{(m')}$ and $\lambda_2^{(m)} \neq \lambda_2^{(m')}$ when $m \neq m'$.

    Suppose now, by contradiction, that there exists $m \in \mathbb{Z}$ with $0<m<\pi/2$ such that
    \[
        \lambda_1^{(m)}=\lambda_2^{(m')}.
    \]
    Then $\cos(2\pi m/n)- \cos(2\pi m'/n) = 3$, which is impossible. Hence, $\lambda_1^{(m)}\neq\lambda_2^{(m')}$ for all $m, m' \in \mathbb{Z}$ with $0< m, m'<\pi/2$.
    
    Let $\lambda_1^{(m)} \in \mathbb{Z}$ or $\lambda_2^{(m)} \in \mathbb{Z}$. Then $m=n/4\in \mathbb{Z}$ and hence $\lambda_1^{(m)}=1$ and $\lambda_2^{(m)}=4$.
    
    Therefore, for all $W, W' \in \operatorname{Irr}(\mathbf{D}_n,\complexset)$, one has $a_{W, W'} \not\equiv 0$ satisfying condition $(i)$. Conditions $(ii)$ and $(iii)$ of Theorem \ref{thm:Cayley!generic} are immediately satisfied since all $D_W(\alpha)$ have simple eigenvalues. 
\end{example}

\begin{example}
    The \textbf{alternating group} $\mathbf{A}_4$ can be defined by $\mathbf{A}_4= \langle t,x\rangle$ with $t=(123)$ and $x= (12)(34)$. Let $t,x \in S$. There exists four irreducible representations (see \cite[\S 5.7]{serre1977} for details). Let $\omega:= e^{2\pi i/3}$. The irreducible representations of degree $1$ are
    \[
        \rho_j (t)=\omega^j \quad \text{and} \quad \rho_j (x)=1, \quad \text{for }j \in \{0,1,2\}.
    \]
    The remaining irreducible representation is of degree $3$. We will denote it by $\rho_3$ and it can be generated by
    \[
        \rho_3(t)=
        \begin{pmatrix*}[r]
            1 & 0 & 0\\
            0 & 0 & 1 \\
            -1 & -1 & -1 
        \end{pmatrix*}
        \quad \text{and} \quad
        \rho_3(x)=
        \begin{pmatrix*}[r]
            0 & 1 & 0\\
            1 & 0 & 0 \\
            -1 & -1 & -1.
        \end{pmatrix*}
    \]
   
    Let $\alpha_t=\alpha_x:=1$ and $\alpha_s:=0$ for $s \in S\setminus\{t,x\}$. Thence,
    \begin{gather*}
        P_{W_0}(\alpha)(\lambda)=\lambda, \quad P_{W_1}(\alpha)(\lambda) = P_{W_2}(\alpha)(\lambda) =\lambda - 2\big(1- \cos({2}\pi/3)\big), \\
        and \quad  P_{W_3}(\alpha)(\lambda)=- (\lambda-1)(\lambda-4)(\lambda-5).
    \end{gather*}
    
    Since $W_1^\ast\cong W_2$, one can easily verify items $(i)$ $(ii)$ and $(iii)$ of Theorem \ref{thm:Cayley!generic}. It thus follows that the generic irreducibility of the eigenspaces of $\Delta_w$ holds on $\mathcal{C}(\mathbf{A}_4,S)$.
\end{example}

\begin{example}
 Let $\mathbf{S}_4$ be the \textbf{permutation group of four elements} $\{1,2,3,4\}$. The order of $\mathbf{S}_4$ is $4!=24$. Consider $S=\{\tau,\sigma^{\pm1}\}$ with $\tau = (34)$ and $\sigma = (123)$. Then $S$ generates $\mathbf{S}_4$.  In fact, $\mathcal{C}(\mathbf{S}_4,S)$ is the truncated cube (see Fig. \ref{fig:s4}).  We know from \cite[\S 5.8]{serre1977} that there are two irreducible representations of degree $1$, one of degree $2$, and other two of degree $3$.
 
We present below the five nonequivalent irreducible representations evaluated in $\tau$ and $\sigma$ (see \cite{sagan2001,serre1977,zhao2008} for more details). 
 
\paragraph{Representations of degree 1}
 \begin{itemize}
     \item $(W_{11}, \rho_{11})$ with $\rho_{11}$ the trivial irreducible representation, \textit{i.e.}, $\rho_{11} \equiv 1$.

     \item $(W_{12}, \rho_{12})$ with $\rho_{12}$ the sign representation given by $\rho_{12}(\tau)=-1$ and  $\rho_{12}(\sigma)=1$.
 \end{itemize} 
 
\paragraph{Representation of degree 2}
\begin{itemize}
    \item $(W_{21}, \rho_{21})$ such that $\rho_{21}(\tau) = \begin{pmatrix*}[r]
    1 & 0  \\
     -1&-1 
\end{pmatrix*}$ and
$\rho_{21}(\sigma) = \begin{pmatrix*}[r]
    0 & 1  \\
     -1&-1 
\end{pmatrix*}$.
\end{itemize}
 
\paragraph{Representations of degree 3}
\begin{itemize}
    \item $(W_{31}, \rho_{31})$ satisfies $\rho_{31}(\tau) =
 \begin{pmatrix*}[r]
    1 & 0 & 0  \\
     0 & 0 & 1 \\
     0 & 1 & 0
\end{pmatrix*}$ and 
$\rho_{31}(\sigma) = \begin{pmatrix*}[r]
    -1 & -1 & -1  \\
     1 & 0 & 0 \\
     0 & 0 & 1
\end{pmatrix*}$.

\item $(W_{32}, \rho_{32})$ satisfies $\rho_{32}(\tau) = \begin{pmatrix*}[r]
    -1 & 0 & 0  \\
     0 & 0 & 1 \\
     0 & 1 & 0
\end{pmatrix*}$ and
$\rho_{32}(\sigma) = \begin{pmatrix*}[r]
    0 & 1 & 0  \\
     -1 & -1 & 0 \\
     0 & 1 & 1
\end{pmatrix*}$.
\end{itemize}

Let $\alpha \in \mathbb{R}^{S'}$ be associated with the generating set $S= \{\tau,\sigma^{\pm1}\}$. Note that
\[D_{W_{21}}(\alpha) = \begin{pmatrix*}[c]
    3\alpha_\sigma & 0   \\
    \alpha_\tau & 3\alpha_\sigma +2\alpha_\tau 
\end{pmatrix*} \quad \text{and} \quad D_{W_{31}}(\alpha) = \begin{pmatrix*}[c]
    3\alpha_\sigma & 0 & \alpha_\sigma  \\
    0 & 3\alpha_\sigma +\alpha_\tau  
    & \alpha_\sigma -\alpha_\tau
 \\ 0 & -\alpha_\tau & -\alpha_\sigma + \alpha_\tau
\end{pmatrix*}.\]

Then $3\alpha_\sigma$ is a common eigenvalue for $D_{W_{31}}(\alpha)$ and $D_{W_{32}}(\alpha)$ and arbitrary $\alpha$. It follows that $a_{W_{21},W_{31}}=0$ and, therefore, $\Delta_w$ is not $G$-simple on $\mathcal{C}(\mathbf{S}_4,S)$.

However, consider now $H \subseteq \mathbf{S}_4 \setminus \{e\}$ symmetric such that $\{\tau, \sigma^{\pm1}, \eta^{\pm1}\} \subseteq H$ with $\eta := \sigma \tau = (1234)$. Let $\beta \in \mathbb{R}^{T'}$ for the generating set $T$ be such that $\beta_t = 1$ when $t \in \{\tau, \sigma^{\pm1},\eta^{\pm1}\}$ and $\beta_s = 0$ when $s \not\in \{\tau, \sigma^{\pm1},\eta^{\pm1}\}$. One can now verify that all eigenvalues of $D_{W}(\beta)$ for $W \in \operatorname{Irr}(\mathbf{S}_4,\complexset)$ are distinct with multiplicity one, namely:
\begin{align*}
 \lambda_{11} =0, && \lambda_{12} = 4,\phantom{- \sqrt{3},}   &&  \lambda_{21} = 6+ \sqrt{3},  && \lambda_{21}' = 6- \sqrt{3},  \\
    \lambda_{32} = 6, && \lambda_{32}' = 4+ \sqrt{7},   &&  \lambda_{32}'' = 4- \sqrt{7},
\end{align*}
\begin{align*}
    \lambda_{31} &= \frac{1}{3} \left(\sqrt[3]{-8-3i\sqrt{237}} + \sqrt[3]{-8+3i\sqrt{237}} +16\right),\\
    \lambda_{31}' &= \frac{1}{6} \left(-(1+i\sqrt{3})\sqrt[3]{-8-3i\sqrt{237}} + (-1+i\sqrt{3})\sqrt[3]{-8+3i\sqrt{237}} +32\right), \text{ and}\\
    \lambda_{31}'' &= \frac{1}{6} \left((1-i\sqrt{3})\sqrt[3]{-8-3i\sqrt{237}} -(1+i\sqrt{3})\sqrt[3]{-8+3i\sqrt{237}} +32\right).
 \end{align*}

 \begin{figure}[htb!]
     \centering
     \includegraphics[scale=0.234]{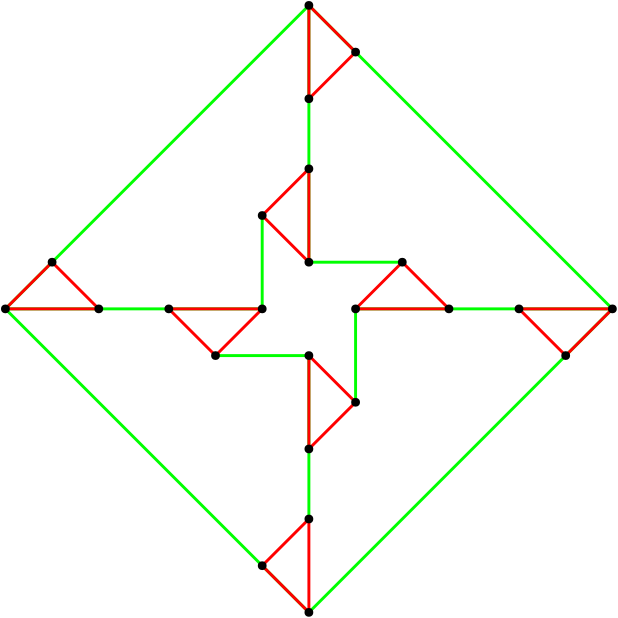} \hspace{0.4cm}
     \includegraphics[scale=0.155]{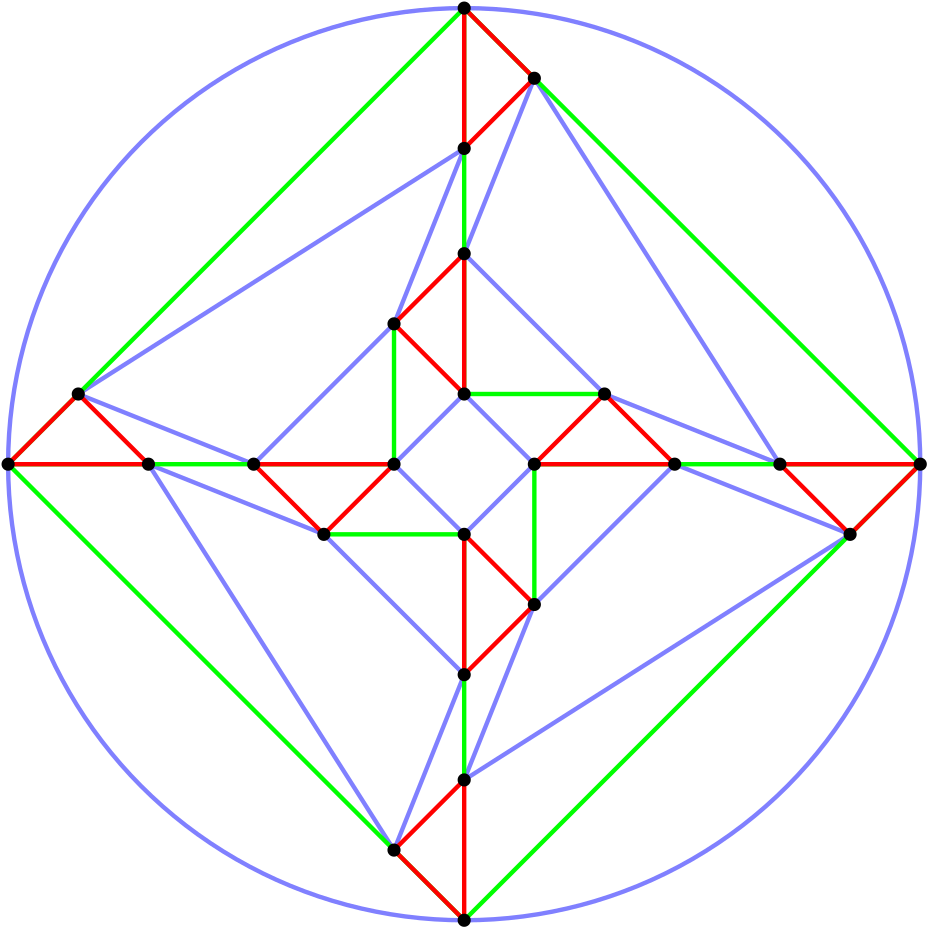}
     \caption{The truncated cube $\mathcal{C}(\mathbf{S}_4,S)$ (left) and $\mathcal{C}(\mathbf{S}_4,T)$ with $T= \{\tau,\sigma^{\pm1},\eta^{\pm1}\}$ (right), the graph union of the truncated cube with the rhombicuboctahedron.}
     \label{fig:s4}
 \end{figure}

Therefore, the generic irreducibility of the eigenspaces of $\Delta_w$ holds on $\mathcal{C}(\mathbf{S}_4,H)$.
\end{example}

\section{\texorpdfstring{The operator $\Lw$}{The operator \texttt{L}{\it w}}} \label{sec.Lw}

We introduce a new operator motivated by the particular differential structure of Cayley graphs. Let us define the linear operator $\Lw$ for $f \in \mathcal{A}$ on $\mathcal{C}(G,S)$ as
\[
\Lw f(x) =  \sum\limits_{s\in S} w_s\big(f(xs^2)-2f(xs)+f(x)\big).
\]

The standard differential calculus on graphs yields the weighted Laplacian $\Delta_w$ given by a metric tensor determined by $w$. The operators are constructed based on the notion of discrete derivative, connections, divergence and other concepts from Riemannian geometry.

Let $\mathcal{G}=(V,E)$ a graph and $f \in \mathcal{A}$ with $f = \sum_{x \in G}f(x)\delta_x$. The concept of direction is essential to define derivatives. Each edge $\{x,y\} \in E$ can be seen as two directed edges $(x,y)$ and $(y,x)$. The derivative of $f$ on $(x,y)$ is given by the discrete derivative $\partial_{x,y}f:= \big(f(y)-f(x)\big)\delta_x$.

However, Cayley graphs are rather special, they are regular and vertex-transient. In particular, each vertex of $\mathcal{C}(G,S)$ has its neighbours determined by $S$. In other words, each $s \in S$ determines a direction for all $x \in G$, so we can now define the derivative of $f$ in the direction $s\in S$ by $\partial_s f := \sum_{x\in G}\big(f(xs)-f(x)\big)\delta_x$. Note that
\[\partial_{x,xs}f(x) = \partial_sf(x).\]
On the other hand,
\[\partial_{x,y}\big(\partial_{x,y}f\big) = \partial_{x,y} \big( (f(y)-f(x)) \delta_x\big) = -\big(f(y)-f(x)\big) \delta_x\]
while
\[\partial_{s}\big(\partial_{s}f\big) = \partial_{s} \left( \sum_{x \in G}\big(f(xs)-f(x)\big) \delta_x\right) = \sum_{x\in G}\big(f(xs^2)-2f(xs)+f(x)\big) \delta_x.\]

This particularity motivates us to define $\Lw$ as above. The operator can be obtained as the divergence of the gradient by applying standard differential calculus techniques, but it is beyond the scope of this text. We will prove now that $\Lw$ is symmetric with respect to $\langle\cdot,\cdot\rangle$.

\begin{lemma}
    Let $G$ be a finite group with $S$ a symmetric generating set. Then, for all $w \in \mathcal{L}_S$ and all $f,g \in \mathcal{A}$,
    \[\langle \Lw f, g\rangle = \langle f,\Lw g\rangle.\]
\end{lemma}
\begin{proof}
    We proceed similarly to the proof of Lemma \ref{lm:Laplacian.symmetric}. Since $G$ acts transitively on itself, $G=Gs^m$ for all $m \in \mathbb{Z}$, then \[\sum_{x \in G} \sum_{s\in S} f(xs^m)\overline{g(x)} = \sum_{x \in G} \sum_{s\in S} f(x)\overline{g(xs^{-m})}.\]

    Recall once again that $s^{\pm1} \in S$ and $w_s=w_{s^{-1}}$. Therefore,
    \begin{align*}
        \langle\Lw f,g\rangle &= \sum_{x \in G} \sum_{s \in S} w_s\big( f(xs^2)-2f(xs) + f(x) \big)\overline{g(x)} \\
        &= \sum_{x \in G} f(x)\left(\overline{\sum_{s \in S} w_{s}\left(g(xs^2) -2g(xs)+ g(x)\right)}\right) = \langle f, \Lw g\rangle. 
    \end{align*}
\end{proof}

Let us write for each $(W,\rho) \in \operatorname{Irr}(G,\C)$ the operator $\mathtt{D}_W: \mathbb{R}^{S'} \to \operatorname{End}(W)$ given by
\[
\mathtt{D}_W(\alpha) := \sum\limits_{s \in S}\alpha_{s} \big(\rho(s^2)-2\rho(s)+\rho(e) \big) = \sum\limits_{s \in S}\alpha_{s} \big(\rho(s)-\rho(e) \big)^2.
\]
Observe now that Corollary \ref{corolario 3.1 - Schueth} holds for the operator $\Lw$ and $\mathtt{L}_w^W := \mathtt{D}_W(w)$ replacing $\Delta_w$ and $\Delta_w^W$, respectively. By rewriting $D_W$ as $\mathtt{D}_W$ from \eqref{DW operator} onwards, the following theorem is straightforward:

\begin{theorem} \label{thm:Cayley!Lw}
Theorems \ref{thm:Cayley!generic} is also true for the operator $\Lw$.
\end{theorem}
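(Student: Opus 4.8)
The plan is to mirror the derivation of Theorem~\ref{thm:Cayley!generic} and Theorem~\ref{thm:undirected Cayley!generic} verbatim, replacing the family of operators $\{D_W\}_{W \in Irr(G,\C)}$ by the family $\{\mathtt{D}_W\}_{W \in Irr(G,\C)}$ throughout, and checking that every structural ingredient used along the way survives this substitution. Concretely, one first observes that $\Lw$ commutes with the right regular representation: since $\Lw = -\sum_{s\in S}(R_*(E_s))^2$ and each $R_*(E_s) \in \gl(\A)$ acts on every subrepresentation, the analogue of Proposition~\ref{laplacian W} holds, so for each $(W,\rho)\in Irr(G,\C)$ the operator $\LwW = -\sum_{s\in S}(\rho_*(E_s))^2 = -\sum_{s\in S} w_s(\rho(s^2)-2\rho(s)+\rho(e))$ is well-defined on $W$, i.e.\ $\LwW = \mathtt{D}_W(w)$. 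Next, the same differential-of-the-intertwiner argument used in Proposition~\ref{W to isotypical} gives $id \otimes \LwW = \varphi_W^{-1}\circ \Lw|_{I(W)}\circ \varphi_W$, because the proof there only used that the relevant operator is obtained by applying $R_*$ (and products thereof) to left-invariant fields, together with the intertwining relation \eqref{isomorphism S}; squaring $R_*(E_s)$ changes nothing in that computation.

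Once the two propositions are in place, the purely representation-theoretic machinery of Schueth carries over unchanged, since (as the authors already remark) that part "does not depend on the differential properties": the Lemma and Corollary~\ref{corolario 3.1 - Schueth} translate word for word with $\Delta_w^{(\cdot)}$ replaced by $\Lw^{(\cdot)}$, giving that $\Lw$ is real $G$-simple iff (i) no two $\mathtt{D}_{W_1}(w),\mathtt{D}_{W_2}(w)$ with $W_1,W_1^*\ncong W_2$ share an eigenvalue, (ii) every $\mathtt{D}_W(w)$ with $W$ of real or complex type has simple spectrum, and (iii) every $\mathtt{D}_W(w)$ with $W$ of quaternionic type has all eigenvalues of multiplicity two. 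One then introduces $\mathtt{P}_W:\g\to\C[x]$ sending $\alpha$ to the characteristic polynomial of $\mathtt{D}_W(\alpha)$ — which is still polynomial in $\alpha$, because the entries of $\mathtt{D}_W(\alpha)=-\sum_s\alpha_s(\rho(s^2)-2\rho(s)+\rho(e))$ depend linearly on $\alpha$ — and defines $\mathtt{a}_{W_1,W_2},\mathtt{b}_W,\mathtt{c}_W$ via the resultant exactly as before. The residual-set argument is then identical: the bad set $\mathcal{N}$ is a finite union of zero sets of nonzero polynomials (finitely many because $Irr(G,\C)$ is finite), hence meagre, and $\g_+\setminus\mathcal{N}$ (resp.\ $\g_+'\setminus\mathcal{N}$) is residual and consists exactly of the weights making $\Lw$ real $G$-simple; this yields both the directed statement (Theorem~\ref{thm:Cayley!generic}) and, restricting to $\g'$, the undirected one (Theorem~\ref{thm:undirected Cayley!generic}).

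The only place where genuine care is needed — and the step I expect to be the main obstacle — is verifying that the analogues of Propositions~\ref{laplacian W} and~\ref{W to isotypical} really do go through for $\Lw$, i.e.\ that $\Lw$ preserves each isotypical component $I(W)$ and that under $\varphi_W$ it corresponds to $id\otimes\mathtt{D}_W(w)$. The potential subtlety is that $\Lw$ involves $R_*(E_s)^2 = (R(s)-R(e))^2$, a second-order expression in the group action; one must confirm that conjugating by $\varphi_W$ and using \eqref{isomorphism S} still commutes past the square, which it does because \eqref{isomorphism S} is multiplicative in $x$ and $R_*$ is just the induced Lie-algebra-type map, so $\varphi_W^{-1}\circ R_*(E_s)^2\circ\varphi_W = (id\otimes\rho_*(E_s))^2 = id\otimes(\rho_*(E_s)^2)$. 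After that observation everything is formal. I would therefore structure the write-up as: (1) state and prove the $\Lw$-versions of Propositions~\ref{laplacian W} and~\ref{W to isotypical}; (2) note that Corollary~\ref{corolario 3.1 - Schueth} transfers verbatim; (3) define $\mathtt{P}_W$, $\mathtt{a}_{W_1,W_2}$, $\mathtt{b}_W$, $\mathtt{c}_W$ and observe polynomiality; (4) repeat the meagre-set argument of Theorem~\ref{thm:Cayley!generic} and its restriction to $\g'$ from Theorem~\ref{thm:undirected Cayley!generic}. Since steps (2)--(4) are literal transcriptions, the proof reduces to step (1), which is a short computation.
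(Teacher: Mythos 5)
Your proposal is correct and follows essentially the same route as the paper, which simply observes that Corollary~\ref{corolario 3.1 - Schueth} and the subsequent resultant/meagre-set argument carry over verbatim once $D_W$ is replaced by $\mathtt{D}_W$. You actually supply more detail than the paper does, in particular the explicit check that the intertwining relation \eqref{isomorphism S} commutes past the square in $R_*(E_s)^2$, which the paper leaves implicit.
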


\begin{remark}
 In fact, it is now easy to verify from the precedent constructions that, with some elementary adaptations, Theorem \ref{thm:Cayley!Lw} also holds for a more arbitrary class of operators $R(z)$ satisfying $z \in \mathbb{C}[G]$ self-adjoint.  
\end{remark}

Note that Remark \ref{rem:Ssubset:irr} is also true for $\Lw$ as consequence of Theorem \ref{thm:Cayley!Lw}. The examples from Section \ref{main results} can be easily addapted for $\Lw$. For instance, if $G$ is a finite abelian group it suffices to write $\mathtt{D}_W(\alpha)= \kappa_j$ in Example \ref{ex:abelian} with 
\begin{equation*} \label{kappa:abelian:Lw}
    \kappa_j= \sum_{s \in S} \alpha_s\big(1-\rho_j(s)\big)^2 \; ,
\end{equation*}
replacing \eqref{DWj:func} and the conclusion \eqref{kappa:neq} for $\Lw$.

A special case for $\Lw$ is when $S \subseteq G\setminus \{e\}$ is such that $s^2=e$ for every $s \in S$. Then $G \simeq \mathbf{C}_2\times \mathbf{C}_2 \times \dots \times \mathbf{C}_2$ (\emph{e.g.}, $\mathbf{K}$ the Klein four group) and $|G|=2^n$. Therefore $\Lw= 2 \Delta_w$ and the generic spectrum is completely conditioned to Theorem \ref{thm:Cayley!generic}.

\section*{Acknowledgements}

We sincerely thank the anonymous referees for their invaluable feedback and constructive criticism, which significantly improved this manuscript. Their meticulous review and insightful suggestions have been instrumental in improving our work.

\bibliographystyle{siam}
\bibliography{references}

\end{document}